%% file: main.tex
\documentclass[11pt,a4paper]{article}
\usepackage[T1]{fontenc}
\usepackage{lmodern,amsmath,amssymb,amsthm,mathtools}
\usepackage[margin=25mm]{geometry}
\usepackage{microtype,booktabs,array,enumitem,xurl,tikz}
\usepackage[hidelinks]{hyperref}
\setlist{itemsep=3pt,topsep=5pt}
\newtheorem{theorem}{Theorem}
\newtheorem{lemma}[theorem]{Lemma}
\newtheorem{proposition}[theorem]{Proposition}
\newtheorem{corollary}[theorem]{Corollary}
\theoremstyle{remark}
\newcommand{\Fp}{\mathbb F_p}
\newcommand{\ind}{\mathbf1}
\newcommand{\aug}{\operatorname{aug}}
\newcommand{\hats}{\widehat s}
\newcommand{\hatP}{\widehat P}
\title{Failure of direct finiteness for group algebras\\in every odd characteristic}
\author{Elena I. Bunina\quad Winston Lee\\[3pt]
Alexei Miasnikov\quad Kyrylo Muliarchyk}
\date{October 15, 2026}
\hypersetup{pdftitle={Failure of direct finiteness for group algebras in every odd characteristic},pdfauthor={Elena I. Bunina, Winston Lee, Alexei Miasnikov, Kyrylo Muliarchyk}}
\begin{document}
\maketitle
\begin{abstract}
For every odd prime $p$, we construct a finitely presented group $G_p$ and elements $a,b,c\in\mathbb F_p[G_p]$ such that $ab=1$, $ac=0$, and $c\ne0$. Thus $\mathbb F_p[G_p]$, and the group algebra over every field of characteristic $p$, fails to be directly finite. The construction combines a single exceptional self-product pair with signed bipartite cancellation. New integer matching data realize the exceptional self-intersection while retaining a strict squared-word decay estimate. We verify the adaptations needed for the earlier bounded-path and planar-localization arguments. A minimal disk picture protects the identity coefficient of $c$, so the proof does not require torsion-freeness.
\end{abstract}

\section{Introduction}
A unital ring $R$ is \emph{directly finite} if $ab=1$ implies $ba=1$ for all $a,b\in R$. It is \emph{stably finite} if every finite square matrix ring over $R$ is directly finite. For a group $G$ and a field $k$, the group algebra $k[G]$ consists of finite sums of group elements with coefficients in $k$, with multiplication extended from the group law. Kaplansky's direct-finiteness question asks whether $k[G]$ is directly finite for every $k$ and $G$. The question includes groups with torsion.

Kaplansky formulated the positive-characteristic question in \emph{Fields and Rings}, first published in 1969. We cite his original discussion in the second edition~\cite[pp.~122--123]{kaplansky}: after stating direct finiteness and its matrix version in characteristic zero, he asks whether one-sided inverses in group algebras are also two-sided in characteristic $p$. The characteristic-zero argument uses finite von Neumann algebras. The positive-characteristic question is the remaining case of what is now called Kaplansky's direct-finiteness conjecture.

When $G$ is finite, $k[G]$ and all its finite square matrix rings are finite-dimensional over $k$, so their direct finiteness follows from linear algebra, even when the characteristic divides $|G|$. For infinite groups, finite support alone does not give a finite-dimensional ambient algebra. Elek and Szab\'o proved stable finiteness for group algebras of sofic groups over arbitrary division rings~\cite[Corollary~4.7]{elekszabo}. Their theorem covers, in particular, amenable and residually finite groups. Consequently a group admitting a counterexample over even one field must be nonsofic.

Direct finiteness is distinct from the zero-divisor and unit questions for torsion-free group algebras. Nonzero zero divisors alone do not provide a one-sided inverse, and torsion-freeness is not part of the direct-finiteness conjecture. Our construction uses three finite sums, arranged so that the two product identities are automatic while one coefficient is protected from collapse.

\subsection{The result}
For each odd prime $p$, we construct a group $G_p$ and elements
\begin{equation}\label{eq:certificate}
ab=1,\qquad ac=0,\qquad c\ne0\qquad\text{in }\Fp[G_p].
\end{equation}
These identities force $ba\ne1$, since $ba=1$ would give $c=(ba)c=b(ac)=0$.

\begin{theorem}\label{thm:main}
For every odd prime $p$, there is a finitely presented group $G_p$ and elements $a,b,c\in\Fp[G_p]$ satisfying \eqref{eq:certificate}. Consequently $\Fp[G_p]$ is not directly finite. The same witnesses work over every field of characteristic $p$.
\end{theorem}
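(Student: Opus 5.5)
The plan is to build $G_p$ as a finitely presented group whose relators are chosen exactly so that the products $ab$ and $ac$ collapse formally, and then to show by a geometric argument that the collapse does not also kill $c$. Concretely, I would fix a free group $F$ on a finite alphabet and write $a=\sum_i\alpha_i u_i$, $b=\sum_j\beta_j v_j$, $c=\sum_k\gamma_k w_k$ with $u_i,v_j,w_k\in F$ and integer coefficients. The coefficients are chosen so that all but one of the formal products $u_iv_j$ can be grouped into signed pairs $(+,-)$ of a bipartite matching. The relators of $G_p$ will be the equalities $u_iv_j=u_{i'}v_{j'}$ for the matched pairs, and likewise for the products $u_iw_k$.

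\textbf{Step 1 (the product identities).} Since each matched pair carries opposite signs, it cancels in $\mathbb Z[G_p]$. For $ab$ there is one exceptional self-product term $u_{i_0}v_{j_0}$, which I would make trivial by a relator. I would choose the integer matching data so that this term's total coefficient is $\equiv1\pmod p$, while every other identified cluster has total coefficient $\equiv0\pmod p$. The oddness of $p$ enters here: the signed pairing has to absorb an exceptional term whose multiplicity is forced to be even, such as $2$, and I would arrange that this multiplicity is invertible modulo $p$, then rescale. For $ac$ all terms are matched with no exception, so $ac=0$ holds already over $\mathbb Z$. These identities are then automatic from the presentation, and the group is finitely presented because only finitely many relators are used.

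\textbf{Step 2 (protecting $c$).} It suffices to show that the identity coefficient of $c$ is nonzero in $\Fp$. I would put $w_{k_0}=1$ with $\gamma_{k_0}=1$ and show that no other $w_k$, and no other word forced equal to $1$, represents the identity in $G_p$. For this I would take a minimal van Kampen (disk) diagram for a hypothetical relation $w_k=1$ and show, via a small-cancellation style estimate, that it must contain a cell with a long boundary arc on the diagram boundary. This is the ``strict squared-word decay'' mentioned in the abstract: each relator piece shared between two cells has length less than a fixed fraction of the relator length. By a Greendlinger-type lemma, the boundary word would then contain more than half of some relator, which is impossible for the chosen $w_k$ because they are reduced and avoid long relator subwords. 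The bounded-path and planar-localization arguments control the diagram combinatorics near the exceptional self-intersection, where the decay estimate is weakest. This argument needs no torsion-freeness, since it only certifies that one specific group element is nontrivial.

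\textbf{Step 3 (other fields).} If $k\supseteq\Fp$, then $\Fp[G_p]\subseteq k[G_p]$ is a unital subring, so the same $a,b,c$ satisfy \eqref{eq:certificate} in $k[G_p]$. Then $ba\neq1$ as noted above.

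\textbf{Main obstacle.} I expect the main difficulty to be Step 2 near the exceptional self-product pair. There the relator identifies a word with a conjugate of itself, or a shift of itself, so the piece lengths are not uniformly small and the standard $C'(1/6)$ hypothesis fails. My first attempt would be to isolate that relator, prove a weaker decay bound for diagrams that contain it, and show by planar localization that any minimal diagram splits into regions. Each region is then either purely small-cancellation or a bounded strip along the exceptional relator. The final step is to check that neither kind of region can have boundary label equal to a support word of $c$.
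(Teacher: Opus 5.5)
Your outer frame matches the paper: the certificate \eqref{eq:certificate}, the reduction of $c\ne0$ to showing that only the root term of $c$ is the identity (this is \eqref{eq:root}), and Step~3 are all correct. But both substantive steps are missing.

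\textbf{Step~1 supplies no data.} Forcing the identities by relators that identify matched product terms is cheap. The whole problem is to choose data for which that forcing leaves a group in which noncollapse can be proved. You also do not confront the obstruction of Section~2: in a signed bipartite cancellation scheme with every count $\equiv1\pmod p$, signed incidence forces $\aug(a)=0$, so $a$ cannot be right invertible.

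The paper's mechanism is specific. The elements $a,b,c$ are signed sums of vertex labels of two immersed graphs over a rose, and the relators are the labels of closed graph paths. Then $ab=1$ and $ac=0$ follow from the edge pairing $a_{\bar t}b_{\bar t}=-a_tb_t$ together with the congruences \eqref{eq:AA}--\eqref{eq:AB}. These congruences are realized by projective-plane types over $q=p^e$, $p$ extra labels, a root type of size $q\equiv0$, and exact integer slot balancing (Sections~3--4). The exceptional pair is not a relator: its term is $g_rg_r^{-1}=1$ in any group. So no relator ``identifies a word with a conjugate of itself,'' and no parity-and-rescaling step occurs.

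\textbf{Step~2 misreads ``strict squared-word decay.''} It is not a bound on piece lengths. It is the spectral estimate \eqref{eq:word-decay} for the turn weights of a random matching model. That estimate feeds a first-moment count (Proposition~\ref{prop:bounded-path}) which excludes only bounded systems of closed paths (plus at most one interval rooted at the fixed vertex $s$) whose occurrences are almost all genuinely paired. The exponent bound $\sum_j(V_j-E_j-k_j)\le0$ uses this closedness. Two disjoint open paths with the same word contribute a factor $H^2$ and are not excluded. Since the true per-letter decay is only $1-O(1/q)$, such repeated words are expected at lengths far beyond the girth $\ell_H$. Hence no $C'(\lambda)$ condition and no Greendlinger lemma is available.

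Small cancellation could not be the main engine anyway. A finitely presented $C'(1/6)$ group is hyperbolic and cubulated (Wise), hence virtually special (Agol), residually finite and sofic. Elek--Szab\'o then makes its group algebras directly finite (compare Corollary~\ref{cor:nonsofic}). Everything in your plan would therefore rest on the ``exceptional region'' you leave unanalyzed.

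\textbf{The missing idea is the paper's noncollapse argument:}
\begin{enumerate}
\item Choose the matchings at random, conditioned on girth, so that bounded genuine systems are excluded.
\item Take a minimal disk picture for a trivial path from $s$.
\item Use same-edge band surgeries to make every pairing genuine.
\item Apply planar localization to extract an excluded system (Lemmas~\ref{lem:planar-localization} and~\ref{lem:noncollapse}).
\end{enumerate}
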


The last assertion follows from the natural injective map $\Fp[G_p]\hookrightarrow k[G_p]$ for every field $k$ of characteristic $p$. Thus both the coefficient field and the quantifier over primes are explicit: the witnesses already have coefficients in the prime field, and a group is obtained for each odd prime. The group and the finite sums may depend on $p$. Their construction selects suitable finite labelled graphs for sufficiently large multiplicity; it does not provide a computed small presentation.

\begin{corollary}\label{cor:nonsofic}
For every odd prime $p$, the finitely presented group $G_p$ constructed in Theorem~\ref{thm:main} is nonsofic.
\end{corollary}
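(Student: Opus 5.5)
The plan is to argue by contradiction, combining Theorem~\ref{thm:main} with the Elek--Szab\'o theorem quoted in the introduction~\cite[Corollary~4.7]{elekszabo}. That theorem says that for every sofic group $G$ and every division ring $D$, the group algebra $D[G]$ is stably finite. Stable finiteness contains direct finiteness as the $1\times1$ matrix case, so $D[G]$ is in particular directly finite.

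Fix an odd prime $p$, and let $G_p$ together with $a,b,c\in\Fp[G_p]$ be the finitely presented group and witnesses provided by Theorem~\ref{thm:main}. Suppose for contradiction that $G_p$ is sofic. The field $\Fp$ is a (commutative) division ring, so the cited corollary applies with $D=\Fp$ and $G=G_p$. It shows that $\Fp[G_p]$ is directly finite.

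On the other hand, the relations \eqref{eq:certificate} give $ab=1$. If also $ba=1$, then
\[
c=(ba)c=b(ac)=b\cdot0=0,
\]
which contradicts $c\ne0$. Hence $ba\ne1$, and $\Fp[G_p]$ is not directly finite. This is exactly the second assertion of Theorem~\ref{thm:main}. The two conclusions contradict each other, so $G_p$ is nonsofic. Finite presentability of $G_p$ is part of the conclusion of Theorem~\ref{thm:main} and needs no separate argument.

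The corollary itself has no real obstacle; all the difficulty lies in Theorem~\ref{thm:main}. The only point to check is that the Elek--Szab\'o result is used in a form that covers the prime field $\Fp$ in positive characteristic, and not only characteristic zero. The cited statement is for arbitrary division rings, so this holds. An alternative would be to pass to any field $k$ of characteristic $p$, using the last sentence of Theorem~\ref{thm:main}, but this is unnecessary.
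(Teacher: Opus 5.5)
Your proposal is correct and follows the same route as the paper: assume $G_p$ is sofic, apply Elek--Szab\'o~\cite[Corollary~4.7]{elekszabo} with the division ring $\Fp$ to conclude that $\Fp[G_p]$ is directly finite, and contradict Theorem~\ref{thm:main}. Your extra step deriving $ba\ne1$ from $ab=1$, $ac=0$, $c\ne0$ just repeats the remark made right after \eqref{eq:certificate}; it is harmless and makes the argument self-contained.
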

\begin{proof}
If $G_p$ were sofic, the theorem of Elek and Szab\'o~\cite[Corollary~4.7]{elekszabo} would make $\Fp[G_p]$ directly finite, contradicting Theorem~\ref{thm:main}.
\end{proof}

\subsection{Relation to earlier constructions}
The September 23 manuscript~\cite{sep} gives a characteristic-two zero-divisor construction for a finitely presented torsion-free group. It introduces the bounded-path and deterministic planar-localization arguments used here. Failure of direct finiteness over $\mathbb F_2$ was established in~\cite{oct}. In odd characteristic, the September 26 construction~\cite{oddone} gives a counterexample for one specified odd prime over a field of order $p^4$, using a different construction with torsion. That result does not assert failure for every odd prime.

Ivanov~\cite{ivanov} extends the signed zero-divisor construction to every odd prime. His framework uses a bipartite graph on one side, projective-plane types with additional labels, and a squared-word estimate. These are also ingredients of the present proof. The zero-divisor witnesses cannot be used unchanged for direct finiteness: their signed left factor has augmentation zero, whereas a right-invertible element must have nonzero augmentation. Section~2 makes this obstruction explicit.

The contribution here is an exceptional-pair design compatible with the odd-characteristic signed construction. It gives failure of direct finiteness for every odd prime, with witnesses over the prime field. The new type frequencies and integer corrections simultaneously enforce the algebraic intersection conditions, exact inverse-label matching, and a strict word-decay estimate. The noncollapse proof then protects only the coefficient needed for the direct-finiteness certificate.

\subsection{The construction and its safeguards}
The defining data are two finite graphs immersed into a rose whose oriented edges are labelled by a signed alphabet. We impose the labels of all closed graph paths as relations, using a finite set of fundamental loops. A vertex in either chosen root component then determines a group element by reading a path from the root. The first graph is bipartite, and its two vertex colors supply the signs in the left factor.

The outgoing-label sets have intersection cardinality $1$ modulo $p$ for every relevant pair of vertices, except for the first root paired with itself, where the cardinality is $0$ modulo $p$. Pairing each edge label with its inverse cancels the corresponding signed products. The single exceptional pair leaves the identity coefficient in $ab$, while all terms in $ac$ cancel. These calculations hold for every matching of the prescribed slots and allow different vertices to represent the same group element.

Two further steps are essential. First, the type multiplicities must be realizable by actual matchings: inverse labels need exactly equal slot counts, and the first graph must respect its bipartition. A congruence modulo $p$ does not suffice for these integer requirements. Second, the resulting quotient must retain $c\ne0$. We ensure that the second root is the only vertex in its component whose group label is the identity, yielding $[c]_1=1$. No separation of the other vertex labels is needed.

The probabilistic argument supplies this root separation. A positive test function gives strict contraction of the squared turn-weight matrix. Pool normalization and a small perturbation accommodate unequal matching pools and turns available only at the exceptional root. The bounded-path estimate excludes systems with almost all occurrences paired over distinct graph edges. Deterministic planar localization reduces a hypothetical trivial rooted path to such a bounded system. The required tolerance is chosen before the bounds on its number of paths, lengths, and comparisons.

\subsection{Proof dependencies and organization}
Section~2 proves the algebraic certificate. Sections~3--4 prescribe the types, integer multiplicities, and inverse-label involution. Section~5 establishes strict squared-word decay. Proposition~\ref{prop:bounded-path} in Section~6 states the bounded-path exclusion with its quantifiers; its proof gives the normalized stage expectations and the adaptations of~\cite[Proposition~3.1]{sep}. Lemma~\ref{lem:planar-localization} in Section~7 states the deterministic extraction used from~\cite[Proposition~4.5]{sep}, after which a minimal disk picture proves root noncollapse.

The common-block decomposition and planar-separator argument are taken from the cited September manuscript. We specify the hypotheses used and verify their preservation under the present changes; the proof does not require an asphericity or torsion-freeness conclusion. The bibliography links the three OpenAI manuscripts at a fixed repository revision so that these dependencies can be checked against the versions used here.

\section{The algebra: one controlled exception to cancellation}
\subsection{Labels, paths, and the defining group}
Let $T$ be a finite alphabet with a fixed-point-free involution $t\mapsto\bar t$. Choose one generator from each pair $\{t,\bar t\}$ and interpret the other letter as its inverse. These pairs label the two orientations of edges of a rose $F$. The involution is part of the construction; it need not come from a symmetry of the projective plane used below.

A finite graph $\Gamma\to F$ is \emph{immersed} if each vertex has at most one outgoing edge of each label. Write $S_x\subseteq T$ for the outgoing labels at $x$. An immersed path has no immediate edge reversal. It is \emph{cyclically immersed} if it is closed and also has no reversal where its last and first edges meet.

Take finite loopless immersed graphs $\Gamma_A,\Gamma_B$, with vertex sets $A,B$. In each component choose a basepoint and a spanning tree, and let $P_u$ be its tree path to $u$. Define $G$ by imposing, for each non-tree edge $e:u\to v$ labelled $t$, the relator
\[
\operatorname{lab}(P_u)\,t\,\operatorname{lab}(P_v)^{-1}=1.
\]
These fundamental loops form a free basis of the component's fundamental group. Thus this is a finite presentation and every closed path in either graph has trivial label in $G$; a loop at another vertex is conjugated to the component's basepoint.

Choose roots $r\in A$ and $s\in B$, and let $A',B'$ be the vertex sets of their components. The label of a path from $r$ to $x\in A'$ gives a well-defined element $g_x\in G$, because the labels of closed graph paths are trivial. Define $h_y$ similarly for $y\in B'$. Thus
\[
g_r=h_s=1,\qquad g_{x\cdot t}=g_xt,\qquad h_{y\cdot t}=h_yt,
\]
where $x\cdot t$ is the endpoint of the outgoing $t$-edge. These are group labels of vertices, whereas the members of $T$ are edge labels.

Assume $\Gamma_A$ is bipartite. Put $\epsilon(x)=1$ on its positive part and $\epsilon(x)=-1$ on its negative part, with $\epsilon(r)=1$. Set
\begin{equation}\label{eq:witnesses}
a=\sum_{x\in A'}\epsilon(x)g_x,\qquad
b=\sum_{x\in A'}g_x^{-1},\qquad
c=\sum_{y\in B'}h_y^{-1}.
\end{equation}
The graph $\Gamma_B$ need not be bipartite. These sums count vertices with multiplicity even when their group labels coincide.

\subsection{Why the unchanged zero-divisor factor has no right inverse}
For every finite bipartite graph, signed incidence counting gives the integer identity
\begin{equation}\label{eq:incidence}
\sum_x\epsilon(x)\deg(x)=0:
\end{equation}
the endpoints of each edge contribute opposite signs. If every degree is $1$ modulo $p$, then $\sum_x\epsilon(x)=0$ in $\Fp$, separately on every component. Hence the left factor in the unchanged signed zero-divisor construction has augmentation zero. It cannot have a right inverse, since $ab=1$ implies $\aug(a)\aug(b)=1$.

This obstructs that particular witness. The repair changes one degree and, more generally, one intersection count.

\subsection{The intersection conditions}
We require
\begin{align}
|S_x\cap S_{x'}|&\equiv1-\ind_{(x,x')=(r,r)}\pmod p
&& (x,x'\in A),\label{eq:AA}\\
|S_x\cap S_y|&\equiv1\pmod p
&& (x\in A,\ y\in B).\label{eq:AB}
\end{align}
Every intersection is $1$ modulo $p$, except the root's self-intersection, which is $0$. On the diagonal, the root therefore has degree $0$ modulo $p$ and every other vertex has degree $1$. Equation \eqref{eq:incidence}, applied to the root component, gives
\[
0\equiv\sum_{x\in A'}\epsilon(x)-1,\qquad\aug(a)=1.
\]

\begin{lemma}[Algebraic certificate]\label{lem:algebra}
Conditions \eqref{eq:AA}--\eqref{eq:AB} imply $ab=1$ and $ac=0$, without any assumption that distinct vertices have distinct group labels.
\end{lemma}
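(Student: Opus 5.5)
The plan is to expand both products as signed sums over pairs of vertices, inflate each pair by its common outgoing labels, and cancel the resulting terms in pairs with a fixed-point-free, sign-reversing involution that preserves group elements. Conditions \eqref{eq:AA}--\eqref{eq:AB} then convert the inflated sum back to the original products modulo $p$. No injectivity of $x\mapsto g_x$ is needed, because cancellation happens term by term in $\mathbb Z[G]$ before any coefficients are collected.

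\textbf{Step 1 (expansion and inflation).} First expand
\[
ab=\sum_{x,x'\in A'}\epsilon(x)\,g_xg_{x'}^{-1}.
\]
Consider the finite set of triples
\[
\mathcal T=\{(x,x',t): x,x'\in A',\ t\in S_x\cap S_{x'}\},
\]
and the element
\[
\Sigma=\sum_{(x,x',t)\in\mathcal T}\epsilon(x)\,g_xg_{x'}^{-1}\in\mathbb Z[G].
\]
Grouping by $(x,x')$ gives $\Sigma=\sum_{x,x'}\epsilon(x)\,|S_x\cap S_{x'}|\,g_xg_{x'}^{-1}$. Reducing modulo $p$ with \eqref{eq:AA}, every coefficient becomes $\epsilon(x)$, except at $(r,r)$, where it becomes $0$. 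Since $\epsilon(r)=1$ and $g_r=1$, this gives $\Sigma\equiv ab-1$ in $\Fp[G]$.

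\textbf{Step 2 (the cancelling involution).} Define $\iota(x,x',t)=(x\cdot t,\,x'\cdot t,\,\bar t)$. Both endpoints stay in $A'$, since they are joined to $x$ and $x'$ by edges. The label $\bar t$ lies in $S_{x\cdot t}\cap S_{x'\cdot t}$, because the reversed $t$-edges are outgoing $\bar t$-edges. Immersion makes the outgoing $\bar t$-edge at $x\cdot t$ unique, so it returns to $x$, and $\iota$ is an involution. Next, $\iota$ preserves the group element:
\[
g_{x\cdot t}\,g_{x'\cdot t}^{-1}=g_x\,t\,t^{-1}g_{x'}^{-1}=g_xg_{x'}^{-1}.
\]
It reverses the sign, $\epsilon(x\cdot t)=-\epsilon(x)$, because $\Gamma_A$ is bipartite and $x$, $x\cdot t$ are adjacent. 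Finally, $\iota$ has no fixed points: $\bar t\neq t$, since the involution on $T$ is fixed-point-free (looplessness also gives $x\cdot t\ne x$). Hence the terms of $\Sigma$ cancel in pairs, $\Sigma=0$ already in $\mathbb Z[G]$, and so $ab=1$ in $\Fp[G]$.

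\textbf{Step 3 (the product $ac$).} The argument for $ac$ is the same with one factor replaced. Write $ac=\sum_{x\in A',y\in B'}\epsilon(x)\,g_xh_y^{-1}$. Use the triples $(x,y,t)$ with $t\in S_x\cap S_y$, weighted by $\epsilon(x)$, and the involution $(x,y,t)\mapsto(x\cdot t,\,y\cdot t,\,\bar t)$. This involution preserves $g_xh_y^{-1}$ because $h_{y\cdot t}=h_yt$, and it flips $\epsilon(x)$ by bipartiteness of $\Gamma_A$; note that bipartiteness of $\Gamma_B$ is not needed. The inflated sum therefore vanishes. By \eqref{eq:AB} it equals $ac$ modulo $p$, with no exceptional pair, so $ac=0$.

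The main point requiring care is Step 2. One must check that the involution is well defined within the chosen root components, which uses the existence and uniqueness of the reverse edge given by immersion. One must also check that it has no fixed points, and that the group-element identity holds using only the relations $g_{x\cdot t}=g_xt$ and $h_{y\cdot t}=h_yt$. All of these are local checks, so coincidences among the labels $g_x$ play no role.
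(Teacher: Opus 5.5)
Your proof is correct and is essentially the paper's argument. The paper packages your triple involution into the identities $a_{\bar t}=-a_tt$, $b_{\bar t}=t^{-1}b_t$, $c_{\bar t}=t^{-1}c_t$, so that $a_{\bar t}b_{\bar t}=-a_tb_t$ (your term-by-term cancellation summed over each inverse pair), and then regroups $\sum_ta_tb_t$ and $\sum_ta_tc_t$ by vertex pairs using \eqref{eq:AA}--\eqref{eq:AB} exactly as you do.
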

\begin{proof}
For $t\in T$, define
\[
a_t=\sum_{\substack{x\in A'\\t\in S_x}}\epsilon(x)g_x,\qquad
b_t=\sum_{\substack{x\in A'\\t\in S_x}}g_x^{-1},\qquad
c_t=\sum_{\substack{y\in B'\\t\in S_y}}h_y^{-1}.
\]
Following a $t$-edge bijects the $t$-slots with the $\bar t$-slots in the same component. Its endpoints have opposite $A$-signs, and their group labels differ by right multiplication by $t$. Hence
\[
a_{\bar t}=-a_t t,\qquad b_{\bar t}=t^{-1}b_t,\qquad c_{\bar t}=t^{-1}c_t.
\]
Thus $a_{\bar t}b_{\bar t}=-a_tb_t$ and $a_{\bar t}c_{\bar t}=-a_tc_t$. Summing over inverse pairs and then using \eqref{eq:AA}--\eqref{eq:AB} gives, in $\Fp[G]$,
\begin{align*}
0=\sum_ta_tb_t
 &=\sum_{x,x'\in A'}|S_x\cap S_{x'}|\epsilon(x)g_xg_{x'}^{-1}=ab-1,\\
0=\sum_ta_tc_t
 &=\sum_{\substack{x\in A'\\y\in B'}}|S_x\cap S_y|\epsilon(x)g_xh_y^{-1}=ac.
\end{align*}
Only the pair $(r,r)$ is exceptional, and its contribution is $\epsilon(r)g_rg_r^{-1}=1$. Neither connectedness of the defining graphs nor separation of vertex labels enters the calculation.
\end{proof}

\subsection{Exactly how much noncollapse is needed}
For each $g\in G$, the coefficient of $g^{-1}$ in $c$ is
\begin{equation}\label{eq:fibre}
[c]_{g^{-1}}=\#\{y\in B':h_y=g\}\pmod p.
\end{equation}
Thus $c\ne0$ precisely when some fibre of $y\mapsto h_y$ has size nonzero modulo $p$. In particular, it suffices to prove
\begin{equation}\label{eq:root}
h_y=1\quad\Longrightarrow\quad y=s.
\end{equation}
Then $[c]_1=1$. We prove \eqref{eq:root} in Section~\ref{sec:noncollapse}. No separation of the $A$-labels is needed, and even the $B$-labels away from $1$ may coincide.

\section{Choosing types with exactly one exceptional pair}
Fix an odd prime $p$, choose a power
\[
q=p^e\ge100p^4,\qquad r_0=q/p,
\]
and keep $p,q$ fixed. Only a later multiplicity parameter $H$ tends to infinity. The numerical bound is convenient, not optimal.

In $\operatorname{PG}(2,q)$ there are $q^2+q+1$ points and the same number of lines. Each line has $q+1$ points, and distinct lines meet in one point. Choose a line $L$, distinct points $o_1,o_2\in L$, and $z\notin L$. The point $z$ organizes matching corrections; it is not a graph root. Use all projective points as ordinary labels, and add $p$ extra labels $E=\{e_0,\ldots,e_{p-1}\}$. The alphabet has even size, since $q^2+q+1$ and $p$ are both odd.

For every line $\ell\ne L$, prescribe the normal type
\begin{equation}\label{eq:types}
S_\ell=\begin{cases}\ell\cup E,&\ell\text{ passes through }o_1\text{ or }o_2,\\
\ell,&\text{otherwise},\end{cases}
\qquad S_*=(L\setminus\{o_1,o_2\})\cup\{e_0\}.
\end{equation}
Give $r$ type $S_*$ and every other vertex a normal type. Normal types intersect in $1,q+1,p+1$, or $q+p+1$ labels, all congruent to $1$ modulo $p$, including self-intersections. If $\ell\cap L\notin\{o_1,o_2\}$, then $S_*\cap S_\ell$ is that one ordinary point. For an anchor line, its removed ordinary intersection is replaced by $e_0$, again giving one label. Finally $|S_*|=q\equiv0\pmod p$. This proves \eqref{eq:AA}--\eqref{eq:AB}.

Omitting $L$ from the normal types is essential. All degrees lie between $q$ and $q+p+1$. We still have to balance the inverse-label slots exactly and choose frequencies that make long repeated words rare.

\section{Realizing the types by exact integer matchings}
\subsection{The balance equations}
An outgoing occurrence of a label $t$ is a \emph{slot}. Pairing it with a $\bar t$-slot makes an edge. In $A$, paired slots must belong to opposite colors. Let $N_t^\pm$ count the $t$-slots in $A^\pm$, and put
\[
T_t=N_t^++N_t^-,\qquad D_t=N_t^+-N_t^-.
\]
Both oriented matching requirements hold exactly when
\begin{equation}\label{eq:balance}
T_t=T_{\bar t},\qquad D_t=-D_{\bar t}.
\end{equation}
Indeed $N_t^\pm=(T_t\pm D_t)/2$, so these equations say $N_t^+=N_{\bar t}^-$ and $N_t^-=N_{\bar t}^+$. This is the precise equality of incoming and outgoing counts needed for the matchings. The root adds one to both $T_t$ and $D_t$ at every $t\in S_*$.

\subsection{Baseline counts and pencil corrections}
For each $P\in L$, choose a positive even baseline $B_P$. Exactly $q$ normal lines meet $L$ at $P$. Start with $B_P/2$ copies of each such line type in each $A$-color and in $B$. Correct only the type on $zP$, using integers $d_P,u_P$:
\begin{equation}\label{eq:multiplicity}
m_\pm(\ell)=\frac{B_P+\ind_{\ell=zP}(u_P\pm d_P)}2,
\qquad P=\ell\cap L.
\end{equation}
Add the root of type $S_*$ to $A^+$. In $B$, retain $B_P/2$ copies of each normal type. We require $u_P\equiv d_P\pmod2$ and arrange
\[
\sum_{P\in L}d_P=\sum_{P\in L}u_P=0.
\]
A point of $L$ lies on one corrected line; an affine point $x\ne z$ lies on the corrected line $zx$; and $z$ lies on every corrected line, so the corrections at $z$ cancel.

\subsection{Signed counts and special inverse pairs}
Since $r_0=q/p$ is odd, assign the following values to the $q+1$ points of $L$:
\begin{equation}\label{eq:profiles}
\begin{array}{c|c}
d_P&\text{multiplicity}\\\hline
0&p-1\\
+(r_0-1),\;-(r_0-1)&p-1\text{ each}\\
+j,\;-j\quad(0<j<r_0-1,\ j\text{ even})&p-2\text{ each}\\
+j,\;-j\quad(0<j<r_0-1,\ j\text{ odd})&2\text{ each}
\end{array}
\end{equation}
Writing $r_0=2k+1$, the total is $3(p-1)+2(k-1)(p-2)+4k=q+1$. Symmetry gives $\sum d_P=0$. Set $u_P=0$ for even $d_P$; for the two copies of each odd signed value, assign $u=1$ to one and $u=-1$ to the other. This gives the required parity and $\sum u_P=0$.

For a sufficiently large integer $H$, put
\begin{equation}\label{eq:parameters}
\alpha=\frac{p+3}{2p+4},\qquad\gamma=\frac{p+3}{(p+2)^2},\qquad K=4(p+2)H.
\end{equation}
Assign the following data to distinct points of $L$:
\begin{equation}\label{eq:special}
\begin{array}{c|rrc|c}
P&d_P&u_P&B_P/K&\text{inverse partner}\\\hline
o_1&1-r_0&0&\alpha&v_1\\
v_1&r_0-2&-1&\alpha&o_1\\
o_2&-1&1&\alpha&v_2\\
v_2&0&0&\alpha&o_2\\
Z_0&r_0-2&1&2\alpha&e_0\\
Z_i\ (1\le i<p)&r_0-1&0&2\alpha&e_i\\
w&-1&-1&1&z
\end{array}
\end{equation}
Every entry is available in \eqref{eq:profiles}. All remaining points of $L$ have $B_P=K$.

The large signed counts near $r_0$ compensate for the root. Each extra occurs on the two anchor pencils and has normal signed count $d_{o_1}+d_{o_2}=-r_0$. Across $p$ extras this contributes $-q$, balancing the root degree $q$, while the ordinary normal labels contribute $(q+1)\sum_Pd_P=0$ to the total signed degree.

\subsection{Why these baselines work}
Leading slot balance requires equal baselines at $o_i,v_i$, baseline $B_{o_1}+B_{o_2}$ at each $Z_i$, and $\sum_PB_P=qK$ to pair $w$ with $z$. The last condition becomes
\[
(q-p-3)+4\alpha+2p\alpha=q,
\]
which gives the chosen $\alpha$. There are four baselines $\alpha K$, $p$ baselines $2\alpha K$, and $q-p-3$ baselines $K$. Thus
\begin{equation}\label{eq:baseline}
\sum_PB_P=qK,\qquad
\sum_P(B_P/K)^2=q-p-3+4(p+1)\alpha^2=q-\gamma.
\end{equation}
The strict deficit $\gamma>0$ will yield word decay. All baselines are even; for example $\alpha K=2(p+3)H$.

\subsection{Slot counts and the full involution}
Put $R_P=\ind_{P\notin\{o_1,o_2\}}$ for $P\in L$. Direct counting gives
\begin{equation}\label{eq:slots}
\begin{array}{c|cc}
t&T_t&D_t\\\hline
P\in L&qB_P+u_P+R_P&d_P+R_P\\
x\notin L\cup\{z\},\ P=zx\cap L&qK+u_P&d_P\\
z&qK&0\\
e_i\ (i>0)&2q\alpha K+1&-r_0\\
e_0&2q\alpha K+2&1-r_0
\end{array}
\end{equation}
For extras, the normal total is $q(B_{o_1}+B_{o_2})+u_{o_1}+u_{o_2}=2q\alpha K+1$. Only $e_0$ also receives a root slot. The special pairs in \eqref{eq:special} satisfy \eqref{eq:balance}; for example
\[
\begin{array}{c|cc}
t&T_t&D_t\\\hline
e_0&2q\alpha K+2&1-r_0\\
Z_0&2q\alpha K+2&r_0-1\\
o_1&q\alpha K&1-r_0\\
v_1&q\alpha K&r_0-1\\
z&qK&0\\
w&qK&0
\end{array}
\]
The pairs $o_2,v_2$ and $e_i,Z_i$ for $i>0$ work in the same way.

For the remaining points of $L$, pair profiles $(d,u)$ with $(-d-2,u)$. Explicitly, for each even $j=2,4,\ldots,r_0-1$, the residual multiset contains $p-2$ copies each of $(-j,0)$ and $(j-2,0)$; pair them. For each odd $j=1,3,\ldots,r_0-4$ and each $\xi=\pm1$, pair $(j,\xi)$ with $(-j-2,\xi)$. These exhaust the remaining points and give equal totals $qK+u+1$ and opposite signed counts $d+1$.

For affine points other than $z$, pair directions with profiles $(d,u)$ and $(-d,u)$. Their profile classes have equal size, and each direction contains $q-1$ such affine points. Points in zero-profile directions can be paired among themselves since $q-1$ is even. Together with $z\leftrightarrow w$, these rules define a fixed-point-free involution on all of $T$ satisfying \eqref{eq:balance}.

\subsection{Sampling the graphs}
For large $H$, all multiplicities in \eqref{eq:multiplicity} are positive integers; the sufficient bound $2(p+3)H>r_0-1$ makes this explicit. With $N=q^2K/2$, their zero-sum corrections give
\begin{equation}\label{eq:sizes}
|A^+|=N+1,\qquad |A^-|=|B|=N.
\end{equation}
Connectivity is not assumed; the witnesses use the root components.

The $B$-slot counts are $qB_P/2$ at $P\in L$, $qK/2$ at affine labels, and $q\alpha K$ at extras. The same involution preserves them. Fix any $B$-vertex as $s$ before sampling. For one representative $t$ of each inverse pair, independently choose uniform bijections
\[
(A^+)_t\longrightarrow(A^-)_{\bar t},\qquad
(A^-)_t\longrightarrow(A^+)_{\bar t},\qquad
B_t\longrightarrow B_{\bar t}.
\]
Every slot is paired once, so the graphs are immersed and $\Gamma_A$ is bipartite. Girth conditioning below removes any loops or short cycles initially present.

\input{estimates.tex}
\input{noncollapse.tex}
\input{backmatter.tex}
\end{document}

%% file: estimates.tex
\section{Why long words have a strict decay estimate}
\subsection{Conditional turn frequencies}
The bounded corrections and the single root disappear from limiting frequencies as $H\to\infty$. The limiting normal distribution is the same in each $A$-color and in $B$.

For distinct outgoing labels $i,j$, let $w(i,j)$ be the conditional frequency that a vertex carrying $i$ also carries $j$. Set $w(i,i)=0$: a path arriving by $t$ uses the outgoing slot $\bar t$, and departure by $\bar t$ would reverse that edge. Define
\begin{equation}\label{eq:matrix}
\mathcal{M}_{t,u}=w(\bar t,u)^2.
\end{equation}
The square occurs when two path occurrences must carry the same word. Dependent and repeated edges require the image-graph argument in Section~\ref{sec:transfer}.

The limiting frequencies follow directly from the line counts:
\begin{itemize}
\item Given $P\in L$, its $q$ normal lines have equal weight. Every affine point lies on one of them, giving frequency $1/q$. Each extra has frequency $1$ for $P=o_1,o_2$ and $0$ otherwise.
\item Given an extra, there are $2q$ equally weighted anchor lines. An affine point lies on two, giving $1/q$; each anchor lies on $q$, giving $1/2$; every other extra is always present.
\item Given an affine point $x$, there is one normal line in each direction $P\in L$, of total baseline $qK$. Thus, for ordinary $y\ne x$,
\[
w(x,y)=\frac{B_{xy\cap L}}{qK},\qquad w(x,e_i)=\frac{2\alpha}{q}.
\]
\end{itemize}
Distinct points on $L$ have joint frequency zero, since their only common line is the omitted line $L$.

\subsection{A function controlling every row}
Some rows have large sums, so we compare $\mathcal{M}f$ with a positive function $f$. Put $\eta=\gamma/4$ and
\[
\mathcal H=\{Z_0,\ldots,Z_{p-1},v_1,v_2\},\qquad
f(t)=\begin{cases}
2p,&t\in\mathcal H,\\
1+\eta,&t\in(L\cup E\cup\{z\})\setminus\mathcal H,\\
1,&\text{otherwise}.
\end{cases}
\]
The heavy values occur at the inverse partners of extras and anchors, precisely the row indices whose conditioning labels cause large contributions.

For a row conditioned at an affine point, each direction supplies $q$ other ordinary points. The unweighted ordinary squared mass is
\[
\sum_{P\in L}q\left(\frac{B_P}{qK}\right)^2=1-\frac{\gamma}{q},
\]
by \eqref{eq:baseline}. Increasing the weights of light points on $L$ costs at most $\eta/q$; the target $z$ costs at most $4\eta/q^2$. The heavy targets, extras, and $z$ give an additional bound
\[
\frac{(2p-1)(4p+2)+4p(1+\eta)+4\eta}{q^2}
\le\frac{8p^2+5p-1}{q^2}\le\frac{12p^2}{q^2}.
\]
Here $\alpha<1$, $\eta\le1/4$, and $p\ge3$. Consequently
\begin{equation}\label{eq:affine-row}
(\mathcal{M}f)(t)\le1-\frac{3\gamma}{4q}+\frac{12p^2}{q^2}
\le1-\frac{\gamma}{2q}<1.
\end{equation}
The bound $q\ge100p^4$ suffices; for example $\gamma\ge1/(2p)$.

The other rows satisfy the following estimates:
\[
\begin{array}{c|c|c}
\text{conditioning label }\bar t&\text{upper bound for }(\mathcal{M}f)(t)&f(t)\\\hline
P\in L\setminus\{o_1,o_2\}&1+\eta/q^2&1+\eta\\
o_1\text{ or }o_2&1+p(1+\eta)+\eta/q^2&2p\\
e_i&p+\tfrac12+(p-\tfrac12)\eta+\eta/q^2&2p
\end{array}
\]
Each bound is strictly smaller than the last entry. For rows conditioned at affine points, $f(t)\ge1$, so \eqref{eq:affine-row} applies, including $t=w=\bar z$. Therefore
\begin{equation}\label{eq:contraction}
\mathcal{M}f\le\theta f\qquad\text{for some fixed }\theta<1.
\end{equation}
The strict gap may be small; positivity for fixed $p,q$ is sufficient.

\section{Transferring the probabilistic estimates}\label{sec:transfer}
The geometric argument uses systems of labelled paths. The unequal label pools and the exceptional root require the following adaptations of \cite[Section~3]{sep}.

\subsection{The bounded-path exclusion to be proved}
An \emph{occurrence} is an edge position on a path; different occurrences may traverse the same graph edge. Consider positive-length paths in $\Gamma_A\sqcup\Gamma_B$, all cyclically immersed and closed except possibly one immersed interval whose initial vertex is the predetermined vertex $s\in B$. An \emph{interval comparison} pairs two disjoint occurrence intervals of the same length. It requires either equal words in the given order, or inverse words read in reverse order. The domains of all comparisons are pairwise disjoint as occurrence intervals. A comparison is \emph{genuine} if every pair of corresponding occurrences traverses distinct underlying unoriented graph edges. An occurrence outside all comparison domains is \emph{unpaired}. Paths may otherwise share vertices and edges and may traverse an edge repeatedly.

\begin{proposition}[Bounded-path exclusion]\label{prop:bounded-path}
For sufficiently small fixed $c_0>0$, put $\ell_H=\lfloor c_0\log H\rfloor$. There is a constant $\varepsilon>0$, depending only on the fixed types and turn weights, with the following property. For every fixed positive integer $K_0$, real $C_0\ge1$, and nonnegative integer $I_0$, the conditional probability tends to zero that there exists a system as above with at most $K_0$ paths, total length $M$ satisfying
\[
\ell_H\le M\le C_0\ell_H,
\]
at most $I_0$ interval comparisons, all genuine, and at most $\varepsilon M$ unpaired occurrences. The probability is conditional on both graphs having girth at least $\ell_H$. The constant $\varepsilon$ is chosen before $K_0,C_0,I_0$, which are held fixed as $H\to\infty$. Systems without a rooted interval are included.
\end{proposition}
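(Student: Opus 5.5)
We will prove Proposition~\ref{prop:bounded-path} by a first-moment bound over \emph{system shapes}, following \cite[Proposition~3.1]{sep}. A shape records the number of paths, their lengths, the positions and lengths of the at most $I_0$ comparison domains, the set of unpaired positions, and the combinatorial pattern of which occurrences land on already-revealed edges. With at most $K_0$ paths, total length $M\le C_0\ell_H$ and at most $I_0$ comparisons, the number of shapes excluding unpaired positions is polynomial in $\ell_H$. The unpaired positions contribute at most $\binom{M}{\varepsilon M}\le e^{h(\varepsilon)M}$. For each shape we bound the expected number of realizations in the configuration model and then sum over shapes.

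The realizations will be revealed by exploring the paths occurrence by occurrence, in a \emph{stage} order. First the free (unpaired and first-copy) occurrences are read. Then each comparison's second interval is read, with its word forced to equal (or invert) the first copy. Conditioning on the girth event costs only a factor bounded away from zero, since short cycles are few in expectation for fixed types. With girth at least $\ell_H$ and $M\le C_0\ell_H$, the explored subgraph has bounded excess $O(K_0+I_0)$. Hence only $O(1)$ occurrences close cycles or revisit edges, and the rest reveal fresh edges.

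At a fresh step, the next label given the arriving label $\bar t$ is chosen with conditional frequency $w(\bar t,u)$ up to $1+O(M/H)$ corrections. These corrections come from the unequal pools $B_P\in\{\alpha K,2\alpha K,K\}$, the bounded integer corrections $u_P,d_P$, and the depletion of slots. Pool normalization is what lets every stage expectation be expressed through the limiting weights. Because comparisons are genuine, the two copies of a compared letter occupy distinct edges. The conditional probability that the second copy matches is then again $w(\bar t,u)$, up to the same error. So a compared position pair contributes $w(\bar t,u)^2=\mathcal M_{t,u}$, summed over $u$ for the common word.

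Weighting by the test function $f$ and using \eqref{eq:contraction}, each compared pair contracts by $\theta$. The root type $S_*$ can occur only at the first vertex of the rooted interval, or at $O(1)$ returns by bounded excess. We handle it by slightly perturbing $f$, or simply absorb it into a bounded constant factor. Unpaired occurrences cost at most a factor $\max_t\sum_u w(\bar t,u)f(u)/f(t)\le C_1$. Closing a cycle, or returning to an already-visited vertex, costs a factor $O(1/H)$ per closure. The total vertex count $N\asymp H$ against the number of starting vertices gives $N^{\#\text{paths}-\#\text{closures}}$. Since each closed path, and each comparison creating no genuine independent start, forces at least one closure, we will show the net power of $H$ is nonpositive. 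The expected count is then at most
\[
\operatorname{poly}(\ell_H)\,e^{h(\varepsilon)M}\,C_1^{\varepsilon M}\,\theta^{(1-\varepsilon)M/2}.
\]

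We choose $\varepsilon$, depending only on $\theta$ and $C_1$, so that $h(\varepsilon)+\varepsilon\log C_1<\tfrac14\log(1/\theta)$. The bound then decays like $\theta^{M/4}\le H^{-c_0\log(1/\theta)/4}$, while the polynomial factors are only $\operatorname{poly}(\log H)$. This decay of $H^{-\Omega(c_0)}$ may be too weak to beat any remaining $H^{O(1)}$ from path starts. To avoid that, we will check that the $H$-bookkeeping gives exponent exactly $\le 0$, and take $c_0$ small only so that $M\ll H$ keeps the depletion errors $(1+O(M/H))^M=1+o(1)$.

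The main obstacle is this $H$-power accounting in the presence of repeated edges and bounded excess. The issue is paths sharing vertices, where a comparison could be "paid for" by a coincidence rather than by a fresh matching. I would first try to import the image-graph argument of \cite{sep} verbatim: pass to the image subgraph, count its edges, vertices and Euler characteristic, and verify that only the pool sizes and the single root slot differ, each altering constants but not exponents.
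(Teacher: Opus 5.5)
\textbf{There is a genuine gap} at your step ``only $O(1)$ occurrences close cycles or revisit edges''. Bounded excess bounds the number of branch events in the image graph. It does not bound the number of occurrences on already-revealed edges. A closed path may wind $m$ times around one image cycle of length about $\ell_H$, and different paths may share long stretches. So up to a fraction $1-1/m_*$ of all occurrences, with $m_*\le2(C_0+K_0)$ as in \eqref{eq:multiplicity-bound}, can lie on revealed edges and carry no fresh factor $w$. A compared pair yields $w^2$ only when both copies are fresh.

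Done correctly on the image graph, a one-shot first moment gives each link component roughly $\sum_W\hatP(W)^{|\mathcal U|}$, where $|\mathcal U|$ counts \emph{distinct} linked blocks. For instance, two cycles each wound $m$ times and compared traversal by traversal give decay about $\theta^{M/(2m)}$, not $\theta^{M/2}$. Unpaired occurrences on once-traversed edges still cost $e^{O(b)}$. The balance therefore forces $\varepsilon\lesssim1/(C_0+K_0)$, violating the quantifier order in the statement. That order matters, because Lemma~\ref{lem:planar-localization} chooses $K_0,C_0,I_0$ only after $\varepsilon$.

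\textbf{The missing idea is the paper's stratification by multiplicity.} Level $j$ is the subgraph of image edges with $m_e\ge j$. One bounds $\mathbb E[X_j]$ separately at each level, recounting the admissible word assignments every time via weight bins and spanning trees of link components. Block $i$ thus contributes its decay $d_i$ once per active level, i.e.\ $m_i$ times in total.

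A single level may have positive $H$-exponent; for example, a stretch shared by two paths is a tree at level $2$. Nevertheless, the local coupling of incidences at each vertex gives $\sum_j(V_j-E_j-k_j)\le0$, as in \eqref{eq:level-exponents}. The balance \eqref{eq:block-balance} then turns the summed costs into $-\tfrac\delta2M+a_0b$, with $a_0$ independent of the pattern bounds. Finally, $\mathbb P\le\min_j\mathbb E[X_j]\le(\prod_jB_j)^{1/J_0}$. So the multiplicity bound enters only the final rate $e^{-\delta M/(4J_0)}$ and never $\varepsilon$.

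\textbf{Two further errors.} First, conditioning on girth at least $\ell_H$ does not cost a bounded factor. With degrees at least $q$, the expected number of cycles shorter than $c_0\log H$ grows like $H^{\Theta(c_0)}$ up to logarithmic factors. Hence the girth event has probability tending to zero and cannot simply be divided out. You need the switching bound \eqref{eq:switching}, iterated as in \eqref{eq:prescriptions}, retaining the exact pool constants so that \eqref{eq:telescope} normalizes.

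Second, the exceptional type $S_*$ belongs to the $A$-root $r$, not to the start $s\in B$ of the rooted interval. Paths in $\Gamma_A$ may pass through $r$ anywhere. Its turns between distinct points of $L$ have zero limiting weight, which the paper handles by the perturbation $\hats=s+\tau$ on reduced turns.
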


We first establish the weight and matching estimates used to prove this proposition.

\subsection{Normalizing unequal pools}
Write $H\kappa_i+O(1)$ for an $i$-pool size in one specified color or in $B$, and $H\lambda_{ij}+O(1)$ for the joint count of distinct outgoing labels $i,j$. The leading data are common to the three vertex sets and satisfy
\[
\kappa_i=\kappa_{\bar i}>0,\qquad\lambda_{ij}=\lambda_{ji}.
\]
Define
\begin{equation}\label{eq:normalized}
s(t,u)=\frac{\lambda_{\bar t,u}}{\sqrt{\kappa_t\kappa_u}}\quad(u\ne\bar t),
\qquad s(t,\bar t)=0.
\end{equation}
These weights are invariant under reversal: $s(t,u)=s(\bar u,\bar t)$. With $\mathcal{D}=\operatorname{diag}(\kappa_t)$, the matrix $(s(t,u)^2)$ is $\mathcal{D}\mathcal{M}\mathcal{D}^{-1}$, since $w(\bar t,u)=\lambda_{\bar t,u}/\kappa_t$. It has the same spectral radius as $\mathcal{M}$, so normalization preserves the strict contraction.

\subsection{Turns that exist only at the root}
Two distinct ordinary points on $L$ never occur together in a normal type, but may occur together at the exceptional $A$-root $r$. A zero limiting weight must not rule out such paths. On reduced turns put $\hats=s+\tau$ for a sufficiently small fixed $\tau>0$, retaining zero on cancelling turns. Continuity of the spectral radius preserves the strict squared-matrix bound. The new weights have a positive lower bound on reduced turns and, for large $H$, dominate normalized finite-$H$ counts, including the contributions from $r$. Write $\hats_{\max}=\max_{t,u\in T}\hats(t,u)$.

For $W=t_1\cdots t_h$, put $\hatP(W)=\prod_{i=1}^{h-1}\hats(t_i,t_{i+1})$. There are fixed $C>0$, $\vartheta<1$ such that
\begin{equation}\label{eq:word-decay}
\sum_{W\in T^h}\hatP(W)^2\le C\vartheta^h.
\end{equation}
Individual perturbed weights may exceed one; the squared-word sum still decays exponentially.

\subsection{Conditioning on large girth}
Let $\ell_H=\lfloor c_0\log H\rfloor$ and $d_{\max}=q+p+1$, with $c_0>0$ sufficiently small. In the initial matching, the expected number of edges on cycles shorter than $\ell_H$ is at most
\[
C\sum_{j<\ell_H}jC_{\mathrm{cyc}}^j=o(H).
\]
There are $O(H)$ vertices, every pool has size bounded below by a positive multiple of $H$, and a prescribed cycle imposes one matching constraint per edge.

A switching replaces a bad edge $xy$ and a distant nonbad edge $uv$ in the same oriented pool by $xv,uy$. It preserves all types and the $A$-bipartition. Distance greater than $2\ell_H$ and the absence of a short cycle through $uv$ prevent new short cycles. Only $o(H)$ choices are forbidden. Repeated switching therefore shows that the girth-conditioned sample space is nonempty.

The quantitative switching estimate of \cite[Lemma~2.2]{sep} also applies. After $m$ distinct edges have been prescribed feasibly, an additional specified edge in a pool of size $N_t=H\kappa_t+O(1)$ has conditional probability at most
\begin{equation}\label{eq:switching}
\frac1{N_t-m-r_H},\qquad r_H=O(d_{\max}^{2\ell_H+2})=o(H),
\end{equation}
when the denominator is positive: at least that many forward switches are available, with a uniquely determined inverse switch. For $O(\log H)$ distinct edge prescriptions $\mathcal E$, iteration gives
\begin{equation}\label{eq:prescriptions}
\mathbb P(\mathcal E\mid\text{girth}\ge\ell_H)
\le\exp(o(\log H))\prod_{e\in\mathcal E}(H\kappa_{t(e)})^{-1}.
\end{equation}
The individual pool constants must be retained. A coarse $C/H$ per edge would introduce an uncontrolled exponential factor in path length.

\subsection{Normalization in the image-graph count}
For a prescribed chain $t_1\cdots t_h$, internal vertices contribute joint-label counts and edges contribute reciprocal pool sizes. The leading constants telescope:
\begin{equation}\label{eq:telescope}
\frac{\prod_{i=1}^{h-1}\lambda_{\bar t_i,t_{i+1}}}{\prod_{i=1}^h\kappa_{t_i}}
=\frac{\prod_{i=1}^{h-1}s(t_i,t_{i+1})}{\sqrt{\kappa_{t_1}\kappa_{t_h}}}.
\end{equation}
Finite-$H$ counts are bounded with $\hats$. A bounded image-graph pattern has only boundedly many chain endpoints, so endpoint factors are bounded constants.

The source block argument retains all full blocks, including unmatched ones, and omits only $o(\log H)$ turns near chain ends and between blocks. Their weights cost at most
\[
\max\{1,\hats_{\max}\}^{o(\log H)}=\exp(o(\log H)),
\]
which accounts for weights exceeding one.

\subsection{Repeated edges and genuine comparisons}
\begin{proof}[Proof of Proposition~\ref{prop:bounded-path}]
Fix $K_0,C_0,I_0$ as in the proposition. Traversing the same graph edge twice does not itself create a rare coincidence; we therefore count the actual image graph before imposing genuine comparisons. All error estimates below are uniform over patterns with these fixed bounds.

Write $M$ for total path length, with $\ell_H\le M\le C_0\ell_H$. Let $\Delta$ be the actual image graph. Mark path endpoints, indexing starts, and vertices of image degree different from two. Girth and the total-length bound give only boundedly many chains between marks, as proved in \cite[Section~3]{sep}. If an image edge occurs $m_e$ times, immersion and girth give
\begin{equation}\label{eq:multiplicity-bound}
m_*:=\max_e m_e\le2(M/\ell_H+K_0)\le2(C_0+K_0).
\end{equation}
Indeed two occurrences of the same oriented edge on one immersed path are separated by at least $\ell_H$ positions. Each marked chain has constant multiplicity.

Level $j$ contains the edges with $m_e\ge j$ and their incident vertices. Let $V_j,E_j$ count those vertices and edges. Put $\iota=1$ if the rooted interval is present and $\iota=0$ otherwise, and set $k_1=\iota$, $k_j=0$ for $j>1$. Let $a(v)$ count the interval endpoints at $v$, with multiplicity. Each passage through $v$ couples an incoming traversal incidence to an outgoing incidence on a distinct half-edge, by immersion; cyclic immersion also couples the last and first incidences of a closed path. Only interval endpoints are left uncoupled. Thus
\[
2\max_{e\ni v}m_e
\le\sum_{\text{half-edges at }v}m_e+a(v).
\]
This local incidence coupling is separate from the interval comparisons. Summing over vertices, and using $\sum_va(v)=2\iota$, gives
\begin{equation}\label{eq:level-exponents}
\sum_jE_j=M,\qquad
\sum_jV_j=\sum_v\max_{e\ni v}m_e\le M+\iota,
\qquad
\sum_j(V_j-E_j-k_j)\le0.
\end{equation}
The exponent $k_1$ records the prescribed image $s$ of the interval's initial vertex; no rooted-vertex saving is claimed at later levels.

For each component of $\Delta$ assigned to $A$, choose a compatible bipartite coloring once. Every level inherits this coloring, even if it disconnects. There are at most $K_0$ image components, so these choices cost at most $2^{K_0}$. No new colors are chosen at later levels.

\paragraph{Blocks and weight bins.}
The common block decomposition of \cite[Section~3]{sep} subdivides the marked chains into full blocks of lengths $h_i=\sigma_H+O(1)$, where $\sigma_H\to\infty$ and $\sigma_H=o(\ell_H)$. The subdivision is chosen from the abstract chain and comparison pattern, and every traversal of the same chain uses the same blocks. The block length $\sigma_H$ may depend on this pattern; its asymptotic bounds and all error estimates are uniform over the permitted patterns for fixed $K_0,C_0,I_0$. Write $m_i$ for the traversal multiplicity of block $i$, and $W_i$ for its word in the chain's chosen orientation. Only $o(\ell_H)$ positions and turns are lost in chain-end fragments and block alignment.

Choose a fixed $\delta>0$ so small that \eqref{eq:word-decay} gives
\[
\sum_{W\in T^h}\hatP(W)^2\le e^{-2\delta h}
\]
for all sufficiently large $h$. Every block word is reduced and has positive weight. Define its bin
\[
d_i=\lfloor-\log\hatP(W_i)\rfloor.
\]
The positive lower bound on reduced-turn weights and the last squared-word estimate give a fixed $a_0>0$ such that, for large $H$,
\begin{equation}\label{eq:weight-bins}
\frac{\delta\sigma_H}{2}\le d_i\le a_0\sigma_H,
\qquad
\#\{W\in T^{h_i}:\ \hatP(W)>0,\ \lfloor-\log\hatP(W)\rfloor=d_i\}
\le e^{2d_i-\delta\sigma_H}.
\end{equation}
Indeed, each word in this bin has squared weight greater than $e^{-2d_i-2}$, while each individual word has weight at most $e^{-\delta h_i}$. These conclusions do not require individual turn weights to be at most one.

The aligned comparisons form a partial matching of block occurrences: each matched pair compares $\sigma_H-o(\sigma_H)$ consecutive letters, possibly with reversal and inversion. Forgetting occurrence indices gives a link multigraph on the underlying full blocks, including isolated blocks. For each component $\mathcal U$ put
\[
S_{\mathcal U}=\sum_{i\in\mathcal U}m_i,\qquad
z_{\mathcal U}=\max_{i\in\mathcal U}m_i,
\]
and let $b_{\mathcal U}$ count its unmatched block occurrences. If $\mathcal U$ has no self-link, the partners of a block of maximum multiplicity belong to other blocks, so
\begin{equation}\label{eq:block-balance}
2z_{\mathcal U}\le S_{\mathcal U}+b_{\mathcal U}.
\end{equation}
The alignment construction also gives
\begin{equation}\label{eq:block-totals}
\sigma_H\sum_{\mathcal U}S_{\mathcal U}=M+o(\ell_H),
\qquad
\sigma_H\sum_{\mathcal U}b_{\mathcal U}\le b+o(\ell_H),
\end{equation}
where $b$ is the number of unpaired edge occurrences. These are the block-alignment and link-counting statements of \cite[Section~3]{sep}; they involve only the occurrence comparisons and the abstract image graph.

A word on one block determines the word on the next along a link except for $o(\sigma_H)$ letters. A spanning tree of a link component therefore allows at most $\exp(o(\sigma_H)|\mathcal U|)$ extensions of a chosen root word. A self-link has only $\exp(o(\sigma_H))$ root words: a translation self-link has nonzero shift by genuineness and makes the word periodic apart from $o(\sigma_H)$ free letters, while a reflection self-link is incompatible with immersion and the fixed-point-free inverse involution. These are the same string-counting arguments as in the source. Reversal symmetry of $\hats$ permits choosing any chain orientations.

There are polynomially many abstract chain patterns in $\ell_H$. There are $O(\ell_H/\sigma_H)$ blocks and $O(\sigma_H)$ possible bins per block. Recording the bins consequently gives only $\exp(o(\ell_H))$ enlarged patterns.

\paragraph{The fixed-label stage estimate.}
Fix an enlarged pattern. A permitted level-$j$ label assignment $\omega$ is the restriction of a full word assignment satisfying the bins, comparisons, and immersion constraints. Let $Y_j(\omega)$ count injective realizations of the level-$j$ image graph with this label assignment and the inherited coloring. When the interval is present and $j=1$, its initial vertex must map to the fixed vertex $s$. We claim
\begin{equation}\label{eq:fixed-label-stage}
\mathbb E[Y_j(\omega)\mid\operatorname{girth}\ge\ell_H]
\le H^{V_j-E_j-k_j}
\exp\left(-\sum_{i:m_i\ge j}d_i+o(\ell_H)\right).
\end{equation}
To see the pool constants explicitly, an internal chain vertex with incoming letter $t$ and outgoing letter $u$ has at most $H\lambda_{\bar t,u}+O(1)$ possible images in its prescribed vertex set. Each other unfixed marked vertex has $O(H)$ choices. Prescribing the $E_j$ distinct edges costs the product in \eqref{eq:prescriptions}. Multiplying these vertex and edge bounds gives $H^{V_j-E_j-k_j}$ times the normalized turn products in \eqref{eq:telescope}, with bounded endpoint factors. The perturbed weights dominate the finite-$H$ joint counts, including exceptional-root turns. Retaining the full-block products and bounding the $o(\ell_H)$ omitted turns by $\max\{1,\hats_{\max}\}^{o(\ell_H)}$ gives
\[
\prod_{i:m_i\ge j}\hatP(W_i)
\le\exp\left(-\sum_{i:m_i\ge j}d_i\right),
\]
and proves \eqref{eq:fixed-label-stage}. Injectivity is needed for the distinct edge prescriptions; ignoring it when counting vertex choices only enlarges that count.

Let $X_j$ count all realizations with permitted level-$j$ labels. The number of permitted label assignments is at most
\begin{equation}\label{eq:stage-label-count}
\exp\left(o(\ell_H)+
\sum_{\substack{\mathcal U\text{ without a self-link}\\z_{\mathcal U}\ge j}}
\left(2\min_{i\in\mathcal U}d_i-\delta\sigma_H\right)\right).
\end{equation}
For a component with an active block, choose a root block attaining its minimum bin, count its words by \eqref{eq:weight-bins}, and extend along a spanning tree of links. A self-linked component contributes only its propagation error. Arbitrary words in chain-end fragments contribute $\exp(o(\ell_H))$. The count is a projection of feasible full \emph{word assignments}: the chosen root block may be inactive at level $j$, and no extension of a graph embedding is asserted.

Let $B_j$ be the product of the right-hand sides of \eqref{eq:fixed-label-stage} and \eqref{eq:stage-label-count}, so $\mathbb E[X_j\mid\operatorname{girth}\ge\ell_H]\le B_j$. On multiplying these numerical bounds over the levels, the total exponent of $H$ is nonpositive by \eqref{eq:level-exponents}. For a link component without a self-link, the remaining logarithmic contribution is
\[
-\sum_{i\in\mathcal U}m_i d_i
+2z_{\mathcal U}\min_{i\in\mathcal U}d_i
-\delta\sigma_Hz_{\mathcal U}
\le-\frac{\delta\sigma_H}{2}S_{\mathcal U}
+a_0\sigma_Hb_{\mathcal U}.
\]
For $2z_{\mathcal U}\le S_{\mathcal U}$ this follows from the lower bin bound; otherwise use \eqref{eq:block-balance} and the upper bin bound. A self-linked component has no positive label-count term, so the lower bin bound alone gives the same negative contribution. Summing and using \eqref{eq:block-totals} proves
\begin{equation}\label{eq:product-stage}
\log\prod_{j=1}^{m_*}B_j\le-\frac\delta2M+a_0b+o(\ell_H),
\end{equation}
where $b$ is the number of unpaired occurrences. The constants $\delta,a_0$ depend on the fixed types and weights, not on $K_0,C_0,I_0$. Choose once and for all
\[
\varepsilon=\min\left\{\frac14,\frac{\delta}{16(1+a_0)}\right\}.
\]
If $b\le\varepsilon M$, then for sufficiently large $H$ the product in \eqref{eq:product-stage} is at most $e^{-\delta M/4}$. Put $J_0=\max\{1,\lceil2(C_0+K_0)\rceil\}$. Any full realization forces $X_j\ge1$ at every level, while \eqref{eq:multiplicity-bound} bounds the number of levels by $J_0$. Therefore
\[
\mathbb P(\text{this enlarged pattern is realized}\mid\operatorname{girth}\ge\ell_H)
\le\min_j\mathbb E[X_j\mid\operatorname{girth}\ge\ell_H]
\le e^{-\delta M/(4J_0)}.
\]
The union bound over $\exp(o(\ell_H))$ enlarged patterns is at most
\[
\exp\bigl(o(\ell_H)-\delta\ell_H/(4J_0)\bigr)=o(1).
\]
We multiplied numerical bounds only to select a small one; no independence between levels is used. This proves the transferred bounded-pattern statement with $\varepsilon$ chosen before the pattern bounds.
\end{proof}

\subsection{Diameter and short closures}
For small-set expansion, use the coarser $C/H$ consequence of \eqref{eq:switching} for up to a small positive multiple of $H$ prescriptions, with cutoff $m\le\kappa_{\min}H/3$, where $\kappa_{\min}=\min_{t\in T}\kappa_t>0$. A set of $k$ vertices that fails to double after adjoining its neighbors lies in a set of $2k$ vertices containing at least $\lceil qk/2\rceil$ edges. Counting sets and prescriptions bounds the probability of failure at size $k$ by
\[
\bigl[C'(k/H)^{q/2-2}\bigr]^k,\qquad k\le c'H,
\]
for sufficiently small $c'>0$. Summation gives expansion with probability tending to one. Balls reach a fixed positive fraction of all vertices in $O(\log H)$ steps; packing disjoint balls along a geodesic gives diameter $O(\log H)$ for every component.

The short-closure argument of \cite[Lemma~4.2]{sep} applies: there are $O(H)$ vertices, logarithmic component diameter, and minimum degree at least three even after deleting the incident edges to be avoided at the ends of a segment. Every positive-length immersed segment can be completed to a cyclically immersed closed path by adding $O(\log H)$ edges, with no cancellation of the original segment.

%% file: noncollapse.tex
\section{The noncollapse needed for direct finiteness}\label{sec:noncollapse}
We now use the graph estimates solely to prove \eqref{eq:root}. The required deterministic input is planar localization from \cite[Section~4]{sep}; the reduction to a suitable disk picture is given below.

\subsection{Deterministic localization}
Proposition~\ref{prop:bounded-path} supplies one tolerance $\varepsilon>0$ before the bounds $K_0,C_0,I_0$ are chosen, and includes systems with no rooted interval. On the diameter event, there is a fixed $D$ such that every positive-length immersed segment has a cyclically immersed closure using at most $D\ell_H$ additional edges.

\begin{lemma}[Planar localization]\label{lem:planar-localization}
Fix $D$ and $\varepsilon>0$. There are fixed bounds $K_0,C_0,I_0$ with the following property for all sufficiently large $H$. Suppose $\Gamma_A\sqcup\Gamma_B$ has girth at least $\ell_H$, and every positive-length immersed segment extends without cancellation to a cyclically immersed closed path by appending at most $D\ell_H$ edges. Suppose also that a sphere contains finitely many pairwise disjoint disks satisfying the following conditions:
\begin{enumerate}
\item At least one ordinary boundary is present, and every ordinary boundary carries a positive-length cyclically immersed closed path in $\Gamma_A\sqcup\Gamma_B$.
\item Exactly one exceptional boundary carries an immersed interval from $s$ to a distinct vertex; its endpoints occur at one marked break, across which immersion is not required.
\item Every edge occurrence has one partner, joined to it by an arc in the complement of the disks. The arcs are mutually disjoint. Paired occurrences have inverse signed labels in the induced boundary orientations and traverse distinct underlying unoriented graph edges.
\end{enumerate}
Then the graphs admit a bounded path system as in Proposition~\ref{prop:bounded-path}, with these bounds and at most $\varepsilon M$ unpaired occurrences, where $M$ is its total length.
\end{lemma}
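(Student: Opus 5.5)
This lemma is the deterministic extraction of~\cite[Proposition~4.5]{sep}. I would prove it by following that argument and checking that each step uses only the listed hypotheses. The disk picture is a planar pairing of boundary occurrences. The boundaries and arcs form a planar graph, the \emph{dual pairing graph}, with one vertex per disk or arc region. The plan is to cut this picture, via a planar separator, into a bounded number of pieces. Each piece has total boundary length comparable to $\ell_H$. Inside it, almost every occurrence is paired with another occurrence of the same piece through a long run of parallel arcs. Parallel arcs joining consecutive occurrences on two boundary segments give exactly an interval comparison. Inverse signed labels in the induced orientations translate into either equal words in order or inverse words reversed. The hypothesis that paired occurrences traverse distinct unoriented edges makes every comparison genuine.

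\textbf{Step 1: bands.} Group the arcs into bands, namely maximal families of parallel arcs between the same two boundary intervals. Euler's formula on the sphere bounds the number of bands linearly in the number of disks plus the number of non-parallel corners. So the occurrences not contained in long bands are controlled by the combinatorial complexity of the picture.

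\textbf{Step 2: short boundaries.} Girth at least $\ell_H$ forces each ordinary boundary to have length at least $\ell_H$. For the exceptional interval I would use the closure hypothesis: append at most $D\ell_H$ edges to extend it to a cyclically immersed loop wherever the source argument requires closed paths. The appended edges are unpaired, and their number is charged against the total length.

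\textbf{Step 3: separator cut.} Apply the planar-separator cut of the source to the dual pairing graph, with a separator fraction chosen in terms of $\varepsilon$ and $D$. This produces a subpicture with at most $K_0$ boundary paths and total length $M\in[\ell_H,C_0\ell_H]$. It has at most $I_0$ bands of length at least a small multiple of $\ell_H$, and at most $\varepsilon M$ occurrences that are cut by the separator or lie in short bands. Cut arcs make their endpoints unpaired. Cut boundaries become open segments, which I would close again by short closures, adding at most $D\ell_H$ edges per cut. The bounds $K_0,C_0,I_0$ then depend only on $D$ and $\varepsilon$.

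\textbf{Step 4: the rooted interval.} The closure step cannot move the initial vertex $s$. So if the surviving piece contains the exceptional boundary, I keep that boundary as a rooted interval starting at $s$ instead of closing it. The break point becomes at most two uncoupled endpoints, which is compatible with the $\iota$ bookkeeping in Proposition~\ref{prop:bounded-path}. Otherwise the resulting system has no rooted interval, which the proposition also allows.

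\textbf{Main obstacle.} The hardest step is the quantitative separator estimate. The cost of the cuts, plus the short closures at most $D\ell_H$ per cut, must stay below $\varepsilon M$ while the number of paths stays bounded. I would choose the piece size as a large multiple $C_0\ell_H$ with $C_0\gg D/\varepsilon$, so that the boundedly many $O(D\ell_H)$ closure costs form an $\varepsilon$-fraction. The remaining check is that the source's choice of constants goes through unchanged with the present unequal degrees, which range from $q$ to $q+p+1$, and with the exceptional interval.
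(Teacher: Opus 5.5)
Your proposal follows the paper's proof: the lemma is the deterministic extraction of \cite[Proposition~4.5]{sep} with $\ell_H$ in place of $L$, and it uses only the girth, closure and planar-pairing hypotheses, so the unequal degrees play no role. Keeping the $s$-segment as the rooted interval, closing it, or omitting it are all allowed by Proposition~\ref{prop:bounded-path}. The one step to reorder is the closure accounting: as the paper does, first split long boundaries into segments and close them, then apply planar separation to the pairing graph of these segments and select one surviving cluster. Each closure of at most $D\ell_H$ edges is then paid for by a long segment, whereas your plan of choosing $C_0\gg D/\varepsilon$ does not by itself make closure costs an $\varepsilon$-fraction, since a separator bounds piece sizes only from above and may leave short fragments.
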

\begin{proof}
This is the deterministic extraction in the proof of \cite[Proposition~4.5]{sep}, with $\ell_H$ in place of its girth parameter $L$. Split long boundaries into segments, close those segments, apply planar separation to the pairing graph, and select one surviving cluster. Closure edges and occurrences whose partners were removed are left unpaired. The choices in that proof depend only on $D$ and $\varepsilon$ and give the asserted fixed bounds. Its extraction uses precisely the girth, closure, and planar-pairing hypotheses above.
\end{proof}

The localized system need not retain the exceptional boundary. A selected cluster may avoid it; a long exceptional interval may be split and all of its pieces closed. This is why entirely cyclic systems must remain in the probabilistic exclusion, even though only $s$ needs protection.

Choose the parameters in order: $p,q$ and the type data; word weights and their strict gap; a sufficiently small girth constant and the resulting closure constant $D$; the tolerance $\varepsilon$; the localization bounds; and finally large $H$. The diameter event and the exclusion event of Proposition~\ref{prop:bounded-path} hold simultaneously with probability $1-o(1)$ in the nonempty girth-conditioned space. Choose such a matching. Lemma~\ref{lem:planar-localization} shows that its graphs admit no arrangement of the stated kind. The localization uses graph paths and planar pairings, independently of coefficient characteristic and matching-pool sizes.

\subsection{A minimal disk picture protects the root}
\begin{lemma}[Root noncollapse]\label{lem:noncollapse}
For the chosen matching, no path in $\Gamma_B$ from $s$ to a vertex $y\ne s$ has trivial label in $G$.
\end{lemma}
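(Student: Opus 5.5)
We argue by contradiction and build a van Kampen picture whose existence is forbidden by Lemma~\ref{lem:planar-localization}. Suppose some immersed path $\pi$ in $\Gamma_B$ from $s$ to $y\ne s$ has $\operatorname{lab}(\pi)=1$ in $G$. A non-immersed path can be reduced by deleting backtracks without changing its endpoints or its label, and the endpoints stay distinct, so we may assume $\pi$ is immersed and of positive length. Since $G$ is presented by the labels of the fundamental loops of $\Gamma_A\sqcup\Gamma_B$, the reduced word $\operatorname{lab}(\pi)$ is trivial in the free group modulo the normal closure of labels of closed graph paths. Each relator is conjugate to the label of a cyclically immersed closed path: a closed path can be cyclically reduced in the graph, and by girth it stays of positive length unless it was null-homotopic, in which case it can be discarded. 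We therefore get a van Kampen diagram on a sphere. One exceptional disk carries $\pi$, with its endpoints $s,y$ at one marked break. Each relator disk carries a cyclically immersed closed graph path. The $1$-skeleton is dual to arcs pairing each edge occurrence with an occurrence of inverse label. In other words, we view the diagram as the complement of disks, with edges of the diagram becoming the pairing arcs.

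Among all such pictures we take a minimal one, ordered lexicographically by number of ordinary disks and then by total boundary length. We then verify conditions 1--3 of Lemma~\ref{lem:planar-localization}. Condition 2 holds by construction; immersion of $\pi$ is only lost at the break, which is allowed. Condition 3 requires that paired occurrences traverse distinct underlying unoriented graph edges, and this is the heart of the argument. Suppose two paired occurrences traverse the same graph edge, necessarily in opposite orientations since their labels are inverse. If both lie on one ordinary disk, cut along the arc: the boundary path splits into two closed graph paths, and reducing them gives a picture with shorter total length. If they lie on two distinct ordinary disks, the two closed paths share an edge, so we can splice them along that edge into a single closed path reading the same label. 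This merges the two disks and decreases the disk count. If one occurrence lies on the exceptional boundary, we splice the ordinary boundary into the interval, replacing $\pi$ by a shorter or equal immersed path from $s$ to $y$ with fewer ordinary disks. If both lie on the exceptional boundary, cutting yields a closed graph piece, which becomes an ordinary disk with shorter total length, and a shorter interval from $s$ to $y$. In every case we must reduce immersion defects again by backtrack cancellation, and we must check that the interval never collapses; it cannot, because its endpoints remain $s\ne y$. Condition 1 follows if any ordinary disk remains. If none remains, then $\pi$ is paired with itself by disjoint planar arcs with inverse labels, which forces a reduction in the free group. That contradicts immersion, or else gives a self-pairing on the same edge that was already excluded. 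So at least one ordinary boundary exists.

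Lemma~\ref{lem:planar-localization} then yields a bounded path system with at most $\varepsilon M$ unpaired occurrences, all comparisons genuine. On the chosen matching, the exclusion event of Proposition~\ref{prop:bounded-path} and the diameter event both hold, and this system contradicts the exclusion. Hence no such $\pi$ exists, which gives \eqref{eq:root}.

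The main obstacle is the minimality surgery in the second paragraph. Each cut-and-splice operation must strictly decrease the complexity while keeping the boundaries cyclically immersed closed graph paths and keeping the exceptional boundary a genuine interval from $s$ to a distinct vertex. The splice through the exceptional boundary is the delicate case: there the result must be reduced carefully so that the endpoint $y$ is not cancelled back to $s$. If it were, the path would become closed, the label would no longer need to be trivial, and the picture would fall outside the lemma's hypotheses. I would handle this by tracking the endpoints explicitly through the backtrack reductions.
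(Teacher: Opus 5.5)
Your overall route is the same as the paper's. You use a van Kampen picture with one exceptional boundary carrying the path from $s$ to $y$, use minimality to remove every same-edge pairing by band surgeries, and then invoke Lemma~\ref{lem:planar-localization} and Proposition~\ref{prop:bounded-path}. The step that fails as written is the minimality bookkeeping in the two splitting cases. You order pictures lexicographically with the number of ordinary disks first. But when an arc returns to the same ordinary disk $ePe^{-1}Q$, you keep both closed pieces $P$ and $Q$. When it returns to the exceptional boundary $UeVe^{-1}W$, you turn $V$ into a new ordinary disk. In both cases the number of ordinary disks increases, so the drop in total length does not contradict minimality in your own order. Moreover, these pieces cannot coexist in one picture. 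The disk together with the thickened arc is an annulus on the sphere, so it separates the sphere into two regions. Cutting therefore produces two separate spherical pictures, not one picture with an extra disk.

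The repair is what the paper does. After the band surgery, retain only the region adjacent to the boundary that carries the marked break. In the one-inner-disk case, cap its new closed boundary ($Q$ or $P$). Discard the other region together with all inner disks it contains. The retained picture then has no more ordinary disks than before, and its total boundary length is smaller by at least two. So your lexicographic order works after all, as does the paper's simpler measure: total length of all inner and outer boundaries, with the outer path free to vary. Under that measure all four surgeries, tightening, and discarding decrease it uniformly. A smaller inaccuracy: in the outer--inner splice $UeV,\ e^{-1}W\mapsto UWV$, the new outer path need not be shorter or equal. Only the total boundary length drops by two. This is harmless in your order because the disk count drops, but you should not claim it.
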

\begin{proof}
Suppose such a path exists. Van Kampen's lemma supplies a disk picture: inner disks have boundary lifts to closed paths in the defining graphs, and their complement maps to the rose. The outer boundary reads a graph path from $s$ to $y$. We may allow arbitrary closed graph paths on inner boundaries, since their labels are relators in $G$.

Among all such pictures with fixed endpoints $s,y$, minimize the total length of all inner and outer boundaries, allowing the outer graph path to vary. Tighten inner boundaries cyclically and the outer path relative to its endpoints, and discard constant inner boundaries. Minimality makes every remaining inner path positive-length and cyclically immersed. The outer path is nonempty and immersed because $s\ne y$. At least one inner disk remains: otherwise the outer path would have a nonempty freely reduced label that is trivial in the free group of the rose.

Transverse preimages of interior points of the rose edges give disjoint arcs pairing all letter occurrences; closed preimage curves can be ignored. Cap the exterior by an exceptional disk, retaining the marked break between $y$ and $s$. With coherent boundary orientations the paired labels are inverse.

A pairing arc between occurrences of the same underlying graph edge can be thickened to a band and used to splice their lifts. Writing $e$ for an oriented edge, the four cases are:
\[
\begin{array}{l|l|l}
\text{boundaries involved}&\text{old paths}&\text{spliced paths}\\\hline
\text{two inner}&eP,\ e^{-1}Q&PQ\\
\text{outer and inner}&UeV,\ e^{-1}W&UWV\\
\text{one inner twice}&ePe^{-1}Q&P,\ Q\\
\text{outer twice}&UeVe^{-1}W&UW\text{ (outer)},\ V\text{ (other)}
\end{array}
\]
These are paths in the same defining graph wherever they are spliced, because the matched edge lifts coincide. Every new outer path has endpoints $s,y$.

For two inner disks, replace their disk-and-band union by the spliced inner disk. For an outer and an inner disk, absorb the latter into the exterior. A band returning to a single inner disk splits the picture after capping the two new closed graph boundaries; retain the part containing the outer boundary. For a band returning to the exterior, retain the complementary disk adjacent to the boundary containing the marked break. The other part can be discarded; see Figure~\ref{fig:splitting-surgeries}. Empty boundaries are discarded as well. These are the disk versions of the same-edge surgeries in \cite[Section~5]{sep}.

\begin{figure}[htbp]
\centering
\begin{tikzpicture}[x=0.85cm,y=0.85cm,every node/.style={font=\small}]
  \node[anchor=west] at (-1.05,1.28) {One inner boundary twice};
  \draw (0,0) circle (0.95);
  \draw (0,0) circle (0.34);
  \draw[line width=3pt,gray!45] (0.24,0.24) .. controls (0.93,0.58) and (0.93,-0.58) .. (0.24,-0.24);
  \draw (0.24,0.24) .. controls (0.93,0.58) and (0.93,-0.58) .. (0.24,-0.24);
  \node at (-0.95,0) {$\ast$};
  \node[anchor=east] at (-1.03,0) {break};
  \node at (0.44,0) {$P$};
  \node at (-0.15,0) {$Q$};
  \node[anchor=south] at (0.24,0.26) {$e$};
  \node[anchor=north] at (0.24,-0.26) {$e^{-1}$};
  \draw[->] (1.15,0) -- (1.75,0);
  \draw[thick] (2.9,0) circle (0.95);
  \draw (2.9,0) circle (0.28);
  \node at (2.9,0) {$Q$};
  \node at (1.95,0) {$\ast$};
  \node at (2.9,-1.15) {retained};
  \draw[dashed] (5.15,0) circle (0.60);
  \node at (5.15,0) {$P$};
  \node at (5.15,-0.85) {discarded};
  \node[anchor=west] at (-1.05,-1.85) {Outer boundary twice};
  \begin{scope}[yshift=-4mm]
  \draw (0,-3.1) circle (0.95);
  \draw[line width=3pt,gray!45] (0.48,-2.28) -- (0.48,-3.92);
  \draw (0.48,-2.28) -- (0.48,-3.92);
  \node at (-0.95,-3.1) {$\ast$};
  \node[anchor=east] at (-1.03,-3.1) {break};
  \node at (-0.15,-3.1) {$UW$};
  \node at (0.72,-3.1) {$V$};
  \node[anchor=south] at (0.48,-2.25) {$e$};
  \node[anchor=north] at (0.48,-3.95) {$e^{-1}$};
  \draw[->] (1.15,-3.1) -- (1.75,-3.1);
  \draw[thick] (2.9,-3.1) circle (0.95);
  \node at (2.9,-3.1) {$UW$};
  \node at (1.95,-3.1) {$\ast$};
  \node at (2.9,-4.25) {retained};
  \draw[dashed] (5.15,-3.1) circle (0.60);
  \node at (5.15,-3.1) {$V$};
  \node at (5.15,-3.95) {discarded};
  \end{scope}
\end{tikzpicture}
\caption{The two splitting surgeries, shown schematically with other inner disks suppressed. The gray band joins the paired occurrences of the same edge. After surgery the thick outer boundary belongs to the retained disk picture and carries the marked break $\ast$; the dashed component is discarded. The labels name the remaining boundary paths.}
\label{fig:splitting-surgeries}
\end{figure}

Each operation leaves a valid disk picture with the same outer endpoints and removes two boundary occurrences before any further tightening or discarding. In a split case, the sum of boundary lengths over both pieces has already decreased by two, so retaining only the outer-boundary piece also decreases it. This contradicts minimality.

All pairings are therefore genuine. The minimized picture gives precisely the arrangement excluded in the preceding subsection: at least one ordinary cyclic boundary, one $s$-rooted exceptional interval, and complete planar pairing over distinct graph edges. This contradiction proves the lemma.
\end{proof}

\subsection{Completion of the proof}
Lemma~\ref{lem:noncollapse} gives $h_y=1$ exactly when $y=s$. Hence \eqref{eq:fibre} yields $[c]_1=1$, so $c\ne0$. Lemma~\ref{lem:algebra} gives $ab=1$ and $ac=0$. Therefore $ba\ne1$, proving Theorem~\ref{thm:main}. For any field $k$ of characteristic $p$, the inclusion $\Fp[G]\hookrightarrow k[G]$ preserves these identities and the nonzero coefficient.

This proof establishes the noncollapse required by the algebraic certificate directly. It uses no essential-sphere argument, computation of $\pi_2$, contractibility of a universal cover, or torsion obstruction. The bounded-path and localization estimates remain the main geometric input.

%% file: backmatter.tex
\section*{Use of AI tools}
OpenAI's ChatGPT was used extensively for mathematical exploration, assistance with checking calculations and arguments, literature searches, and drafting and revising this manuscript. The named authors are responsible for the statements, proofs, citations, and final presentation. The AI-assisted checks are not a formal verification of the results. The earlier AI-generated manuscripts used as mathematical inputs are cited explicitly below.

\noindent\small The repository links in references~\cite{sep,oddone,oct} are pinned to commit \texttt{1581d4cbdd69} of October 12, 2026. Section, proposition, and equation references to these manuscripts refer to that revision.